\numberwithin{equation}{section}
\numberwithin{figure}{section}
\newcommand{\N}{\mathds{N}}
\newcommand{\R}{\mathds{R}}
\newcommand{\cU}{\mathscr{U}}
\newcommand{\applied}[2]{\langle #1,#2\rangle}
\DeclarePairedDelimiter\norm{\lVert}{\rVert}
\DeclarePairedDelimiter\abs{\lvert}{\rvert}
\newcommand{\argument}{\,\cdot\,}
\renewcommand{\phi}{\varphi}
\newcommand{\eps}{\varepsilon}
\DeclareMathOperator{\id}{id}
\definecolor{mygreen}{rgb}{0.1,0.75,0.2}
\numberwithin{equation}{section}
\newtheorem*{thm*}{Theorem}
\newtheorem{thm}{Theorem}[section]
\newtheorem{lem}[thm]{Lemma}
\theoremstyle{remark}
\newtheorem{rem}[thm]{Remark}
\newtheorem{example}[thm]{Example}
\theoremstyle{definition}
\newtheorem{defn}[thm]{Definition}
\begin{document}

\title{Mean ergodicity vs weak almost periodicity}
\author{Moritz Gerlach}
\address{Moritz Gerlach\\Universit\"at Potsdam\\Institut f\"ur Mathematik\\Karl-Liebknecht-Stra{\ss}e 24--25\\14476 Potsdam\\Germany}
\email{moritz.gerlach@uni-potsdam.de}

\author{Jochen Gl\"uck}
\address{Jochen Gl\"uck\\Universit\"at Ulm\\Institut f\"ur Angewandte Analysis\\89069 Ulm, Germany}
\email{jochen.glueck@uni-ulm.de}

\keywords{positive operators, weakly almost periodic, order continuous norm, KB-space, mean ergodic}
\subjclass[2010]{Primary 47B65; Secondary: 47A35, 46B42, 46B45}

\date{\today}

\begin{abstract}
We provide explicit examples of positive and power-bounded operators on $c_0$ and $\ell^\infty$ which are mean ergodic but not
weakly almost periodic. As a consequence we prove that a countably order complete Banach lattice on which every positive and power-bounded mean ergodic operator
is weakly almost periodic is necessarily a KB-space.
This answers several open questions from the literature. 
Finally, we prove that if $T$ is a positive mean ergodic operator with zero fixed space on an arbitrary Banach lattice, then so is every power of $T$.
\end{abstract}
\maketitle

\section{Introduction and Preliminaries}

A bounded linear operator $T$ on a Banach space $X$ is called \emph{mean ergodic} if the limit of
its Ces\'aro averages $\lim_{n\to\infty}\frac{1}{n} \sum_{k=0}^{n-1} T^k x$ exists for every $x\in X$.
By the well-known mean ergodic theorem, see e.g.\ \cite[Thm 8.20]{haase2015},
a sufficient condition for a power-bounded operator (and all of its powers) to be mean ergodic is
that the operator is \emph{weakly almost periodic}, meaning that $\{T^n x : n\in\N\}$ is relatively weakly compact for
every $x\in X$. In particular, every power-bounded operator on a reflexive Banach space is mean ergodic. In general, weak almost periodicity is not necessary for mean ergodicity, though; see e.g.\ \cite[Exa 2]{bermudez2000} for a mean
ergodic operator on $C[-1,0]$ whose square is not mean ergodic and who can therefore not be weakly almost periodic, and see \cite[Exa~8.27]{haase2015} for a similar construction on $c_0$.

The problem becomes more subtle if one considers only positive operators on Banach lattices. Sine gave the first example of a positive and contractive 
operator on a $C(K)$-space that is mean ergodic but not weakly almost periodic \cite{sine1976}; see also \cite{emelyanov2009} for another example. 
However, in case that $T$ is a positive, contractive and mean ergodic operator on an $L^1$-space, it was shown independently 
by Komornik \cite[Prop 1.4(i)]{komornik1993} and Kornfeld and Lin \cite[Thm 1.2]{kornfeld2000} that $T$ is weakly almost periodic;
see also \cite[Thm 3.1.11]{emelyanov2007} for an extension of this result to power-bounded operators.

In general, the question on what Banach lattices every positive, power-bounded and mean ergodic operator 
is automatically weakly almost periodic is still open, see \cite[page 131]{emelyanov2007}. In the present article we provide some partial answers to this problem.

In Section \ref{sec:c0}, we give the first counterexample to this question on $c_0$, a Banach lattice with order continuous norm. 
This answers Open Problem 3.1.18 in \cite{emelyanov2007}. In Section \ref{sec:linfty}, we
then construct a counterexample on $\ell^\infty$ which, in addition, answers 
Question~6.(iii) of \cite{emelyanov2009}.
From this we conclude in Theorem \ref{thm:KB} that
every countably order complete Banach lattice on which mean ergodicity and weak almost periodicity are equivalent for positive and power-bounded operators
is necessarily a KB-space.
We also refer to \cite{alpay2006} for a characterization of KB-spaces by mean ergodicity of certain positive operators. 
In the final Section~\ref{sec:ergodic-powers} we prove that all powers of a mean ergodic positive operator $T$ are also mean ergodic in case that the mean ergodic projection of $T$ equals $0$.

Throughout the article, we use the notion \emph{operator} synonymously with \emph{linear operator}.

\section{A counterexample on $c_0$}
\label{sec:c0}

In the following we give an example of a positive, power-bounded and mean ergodic operator on $c_0$ which is not weakly almost periodic. 
Recall that $c_0$, the space of real null sequences over $\N$ endowed with the sup norm, is a Banach lattice with order continuous norm.

We start by noting an easy observation about the norm of positive operators on $c_0$.

\begin{lem}
	\label{lem:basics}
	Every positive operator $T\colon c_0 \to c_0$ extends uniquely to a positive and order continuous operator $S$ on $\ell^\infty$. Moreover, we have
	\[ \norm{T} = \norm{S} = \norm{S\mathds{1}} = \norm{\sup \{ T\mathds{1}_{\{1,\dots,N\}} : N\in \N\}} = \lim_{N\to\infty} \norm{T\mathds{1}_{\{1,\dots,N\}}}.\]
\end{lem}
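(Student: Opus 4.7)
My plan is to construct $S$ explicitly, check the required properties on the positive cone first, and then extract the norm identities from the construction.

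The construction. For $x \in \ell^\infty_+$, set $Sx := \sup_N T(x\mathds{1}_{\{1,\dots,N\}})$, which makes sense because the truncations $x\mathds{1}_{\{1,\dots,N\}}$ lie in $c_0$, the sequence $T(x\mathds{1}_{\{1,\dots,N\}})$ is increasing (by positivity of $T$) and dominated by $\norm{x}_\infty \, T\mathds{1}_{\{1,\dots,N\}} \le \norm{x}_\infty \norm{T} \mathds{1}$, so the pointwise supremum exists in $\ell^\infty$. Additivity and positive homogeneity on $\ell^\infty_+$ follow because monotone suprema in $\ell^\infty$ coincide with pointwise suprema and $T$ is linear on $c_0$. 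I then extend $S$ to all of $\ell^\infty$ by $Sx = Sx^+ - Sx^-$; linearity follows by the standard Riesz-space argument, and $S|_{c_0} = T$ because for $x \in c_0$ one has $x\mathds{1}_{\{1,\dots,N\}} \to x$ in norm, so $T(x\mathds{1}_{\{1,\dots,N\}}) \to Tx$ in norm and (since the sequence is monotone for $x \ge 0$) also order.

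Order continuity and uniqueness. To see $S$ is order continuous it suffices, by standard Riesz-space theory, to check it on downward nets to zero in $\ell^\infty_+$; but any positive order-continuous extension $S'$ must satisfy $S'x = \sup_N S'(x\mathds{1}_{\{1,\dots,N\}}) = \sup_N T(x\mathds{1}_{\{1,\dots,N\}}) = Sx$ for $x \in \ell^\infty_+$, because $x\mathds{1}_{\{1,\dots,N\}} \uparrow x$ in $\ell^\infty$. This simultaneously gives uniqueness and, applied to $S$ itself after verifying that $S$ is indeed order continuous (which is immediate from the definition as a monotone sup combined with the interchange-of-suprema identity for pointwise suprema in $\ell^\infty$), completes the extension part.

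The norm identities. First, $\norm{T} \le \norm{S}$ is trivial since $S$ extends $T$. Next, for any $x \in \ell^\infty$ with $\norm{x} \le 1$ one has $|Sx| \le S|x| \le S\mathds{1}$, hence $\norm{S} = \norm{S\mathds{1}}$. By definition $S\mathds{1} = \sup_N T\mathds{1}_{\{1,\dots,N\}}$, giving the third equality. For the fourth, use that the supremum $\sup_N T\mathds{1}_{\{1,\dots,N\}}$ is taken pointwise in $\ell^\infty$, and since $(T\mathds{1}_{\{1,\dots,N\}})_N$ is an increasing sequence in $c_0 \subset \ell^\infty_+$, interchanging suprema yields
\[ \norm*{\sup_N T\mathds{1}_{\{1,\dots,N\}}}_\infty = \sup_N \norm{T\mathds{1}_{\{1,\dots,N\}}}_\infty = \lim_{N\to\infty}\norm{T\mathds{1}_{\{1,\dots,N\}}}. \]
Finally, $\norm{T\mathds{1}_{\{1,\dots,N\}}} \le \norm{T}$ closes the chain, so all quantities coincide.

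I expect no serious obstacle: the only subtle point is making sure that the pointwise-supremum definition of $S$ really is order continuous and linear on all of $\ell^\infty$, which is a routine Riesz-space check, and that uniqueness is forced by the order density of $c_0$ in $\ell^\infty$, which the argument above delivers in one line.
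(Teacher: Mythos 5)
Your proof is correct, but it takes a genuinely different route from the paper. The paper obtains the extension abstractly as the bidual operator $S = T^{**}$, deduces order continuity from weak$^*$-continuity, gets $\norm{S}=\norm{S\mathds{1}}$ from the fact that $\ell^\infty$ is an AM-space with order unit, and then invokes the Fatou property of $\ell^\infty$ for the remaining equalities; uniqueness is, as in your argument, forced by order density of $c_0$. You instead build $S$ by hand as $Sx = \sup_N T(x\mathds{1}_{\{1,\dots,N\}})$ on the positive cone and verify everything concretely, which makes the proof self-contained (no biduals, no citation of the Fatou property) at the cost of more routine verification; it also has the pleasant side effect that the formula for $S$ is explicit, which is in the spirit of the graph representation used later in the paper. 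The one step you state too quickly is the order continuity of the constructed $S$: "interchange of suprema for pointwise suprema" alone does not suffice. For $x_\alpha \uparrow x$ in $\ell^\infty_+$ you need, after swapping $\sup_\alpha$ and $\sup_N$, that $\sup_\alpha T(x_\alpha\mathds{1}_{\{1,\dots,N\}}) = T(x\mathds{1}_{\{1,\dots,N\}})$ for each fixed $N$; this holds because the truncated net lives in the finite-dimensional sublattice spanned by $e_1,\dots,e_N$, where order convergence of a bounded monotone net coincides with norm convergence, so norm continuity of $T$ applies. With that one line added, the argument is complete; the norm-identity chain at the end is correct as written.
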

\begin{proof}
	The uniqueness is clear since $c_0$ is order dense in $\ell^\infty$. To show the existence of $S$, define $S \coloneqq T^{**}$. 
	Obviously, $S$ is a positive extension of $T$ with $\norm{S}=\norm{T}$. As $S$ is weak$^*$-continuous, it easily follows that $S$ is order continuous. 
	Moreover, $\norm{S} = \norm{S\mathds{1}}$ since $\ell^\infty$ is an AM-space with order unit $\mathds{1}$.
	Now, the equation $\norm{S\mathds{1}} = \norm{\sup \{ T\mathds{1}_{\{1,\dots,N\}} : N\in \N\}}$ follows from the order continuity of $S$ and from
	the fact that $\ell^\infty$, like every AM-space with order unit, has the Fatou property \cite[p.\ 65]{abramovich2002},
	meaning that $\norm{f} = \lim_{j} \norm{f_j}$ for every increasing net $(f_j) \subseteq \ell^\infty_+$ with supremum $f$.
\end{proof}

In order to present the announced example in a most accessible way, we first describe (the action of) positive operators on $c_0$ by a weighted (and directed) graph $(\N,w)$ 
with vertex set $\N$ and weight function $w\colon \N^2 \to [0,\infty)$.
More precisely, let $T$ be a positive linear operator on $c_0$; we associate a graph $(\N,w)$ to $T$ in the following way:
every pair of vertices $(u,v)\in \N^2$ is connected by a directed edge of weight 
\[ w(u,v) \coloneqq \big(T(e_u)\big)_v,\]
where
$e_u=\mathds{1}_{\{u\}}$ denotes the canonical unit vector in $c_0$ supported on $u$.
Vividly speaking, $w(u,v)$ is the amount of mass moved from atom $u$ to $v$ by the operator $T$.
In other words, $\big(w(u,v)\big)_{v,u\in \N}$ is the transition matrix of $T$.
The so defined graph possesses the following properties:
\begin{enumerate}[(a)]
\item $\big(w(u,k)\big)_{k\in\N}$ is a null sequence for each $u\in\N$.
\item $\sup_{u\in\N} \sum_{k=1}^\infty w(k,u) < \infty$.
\end{enumerate}
In fact, condition (a) is nothing but a reformulation of the fact that $Te_u = \big(w(u,k)\big)_{k\in\N} \in c_0$
for every $u\in\N$ and condition (b) follows from
\[ \sum_{k=1}^\infty w(k,u) =\lim_{N\to\infty} \big(T\mathds{1}_{\{1,\dots,N\}}\big)_u  \leq \norm{T}\]  for all $u\in\N$.
Conversely, given a weighted graph $(\N,w)$ such that  $w\colon \N^2 \to [0,\infty)$ satisfies conditions (a) and (b), this graph describes the action of a positive  operator on $c_0$. Let us note this as a lemma.

\begin{lem}
\label{lem:c0graphs}
	Let $(\N,w)$ be a graph with weight function $w\colon \N^2 \to [0,\infty)$ that satisfies conditions (a) and (b) above.
	Then 
	\[ T(x_n) \coloneqq \biggl(\sum_{n=1}^\infty x_n w(n,u) \biggr)_{u\in\N}\]
	defines a positive and order continuous operator on $\ell^\infty$ such that $Tc_0 \subseteq c_0$ and both the operator and its restriction to $c_0$ have norm
	\[ \norm{T} = \sup_{u\in \N} \sum_{k=1}^\infty w(k,u) \eqqcolon C.\]
\end{lem}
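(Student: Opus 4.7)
The plan is to verify each assertion of the lemma in sequence, starting from well-definedness and bounds, then order continuity, invariance of $c_0$, and finally the equality of the norms.

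First, for any $x=(x_n)\in\ell^\infty$ and every $u\in\N$, condition (b) gives the absolute estimate $\sum_{n=1}^\infty |x_n|\, w(n,u) \le \norm{x}_\infty \cdot C$, so the series defining $(Tx)_u$ converges and $\norm{Tx}_\infty \le C\norm{x}_\infty$. This shows $T$ is a well-defined bounded linear operator on $\ell^\infty$ with $\norm{T}\le C$; positivity is immediate from $w\ge 0$.

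Next I would prove order continuity. Given an increasing net $(x_\alpha) \subseteq \ell^\infty_+$ with supremum $x\in\ell^\infty$, one first notes that the supremum in $\ell^\infty$ coincides with the coordinatewise supremum, so $(x_\alpha)_n \uparrow x_n$ for every $n$. By Beppo Levi's theorem (applied with respect to counting measure on $\N$ weighted by $w(\argument,u)$) it follows that $(Tx_\alpha)_u = \sum_n (x_\alpha)_n w(n,u) \uparrow \sum_n x_n w(n,u) = (Tx)_u$ for every $u$; using once more that coordinatewise suprema agree with order suprema in $\ell^\infty$ for order-bounded nets, this yields $T x_\alpha \uparrow Tx$ in $\ell^\infty$. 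For the invariance $Tc_0 \subseteq c_0$, condition (a) says $Te_n = (w(n,u))_{u\in\N} \in c_0$, so $T$ maps the finitely supported sequences (a dense subset of $c_0$) into $c_0$; since $T$ is bounded and $c_0$ is closed in $\ell^\infty$, this gives $Tc_0 \subseteq c_0$.

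It remains to show $\norm{T}=C$ and that the same number is the norm of $T|_{c_0}$. Since $\norm{T|_{c_0}}\le\norm{T}\le C$, it suffices to bound $\norm{T|_{c_0}}$ from below by $C$. For each $N\in\N$ the vector $\mathds{1}_{\{1,\dots,N\}}$ lies in $c_0$ with norm $1$, and
\[
\norm{T\mathds{1}_{\{1,\dots,N\}}}_\infty \;=\; \sup_{u\in\N}\sum_{n=1}^N w(n,u).
\]
Interchanging the two suprema (both are monotone limits in $N$), $\sup_N \sup_u \sum_{n=1}^N w(n,u) = \sup_u \sum_{n=1}^\infty w(n,u) = C$, which gives $\norm{T|_{c_0}}\ge C$ and closes the chain of inequalities.

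I expect no serious obstacle; the only mild subtlety is the order-continuity step, where one must be careful to identify coordinatewise suprema with order suprema in $\ell^\infty$ for order-bounded monotone nets before invoking monotone convergence term by term. Everything else is a direct manipulation of the defining series together with condition (b) and the density of finitely supported sequences in $c_0$.
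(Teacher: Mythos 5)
Your proof is correct, but it diverges from the paper's argument at two points, and in both cases yours is more self-contained. For the invariance $Tc_0 \subseteq c_0$ the paper gives a direct estimate, splitting the defining series at an index $N$ and using conditions (a) and (b); you instead deduce it from $Te_n \in c_0$ (condition (a)) together with the density of the finitely supported sequences in $c_0$, the boundedness of $T$ and the closedness of $c_0$ in $\ell^\infty$ --- both work, yours is shorter. For the norm identity the paper first computes $\norm{T} = \norm{T\mathds{1}} = C$ using that $\ell^\infty$ is an AM-space with unit and then transfers this value to $T|_{c_0}$ via Lemma~\ref{lem:basics}, which in turn rests on the Fatou property of $\ell^\infty$; you obtain the lower bound $\norm{T|_{c_0}} \ge C$ directly by testing against $\mathds{1}_{\{1,\dots,N\}}$ and interchanging two suprema, which bypasses Lemma~\ref{lem:basics} entirely and is arguably cleaner. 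The order-continuity step is essentially the paper's argument in different clothing (the paper works with decreasing nets with infimum $0$ and an explicit $\eps$-splitting of the series; you work with increasing nets and invoke monotone convergence), but one caveat is in order: the monotone convergence theorem for \emph{nets} of functions is false in general (consider the net of indicators of finite subsets of $[0,1]$ under Lebesgue measure), so you should state explicitly that it applies here because the measure $\sum_n w(n,u)\,\delta_n$ is finite by condition (b), which allows you to cut off the tail of the series uniformly over the net; this is precisely the $\eps$-splitting that the paper writes out in detail.
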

\begin{proof}
First of all, for every $(x_n) \in \ell^\infty$ it follows from
\[ \norm{T(x_n)}_\infty =  \sup_{u\in\N} \abs[\bigg]{\sum_{k=1}^\infty x_k w(k,u)} \leq \norm{(x_n)}_\infty \sup_{u\in \N} \sum_{k=1}^\infty w(k,u) \leq \norm{(x_n)} C < \infty\]
that $T(x_n)$ is a bounded sequence. 
Therefore, $T$ is a positive operator on $\ell^\infty$ with $\norm{T} = \norm{T\mathds{1}} =  C$. 
To show that $T$ is order continuous, let $(x^{(j)})_{j \in J} = \big( (x_n^{(j)})_{n \in \N}\big)_{j \in J}$ be a decreasing net in $\ell^\infty_+$ with infimum $0$. 
It suffices to prove that $(Tx^{(j)})_{j \in J}$ converges pointwise to $0$, i.e.\  that
\begin{align*}
	\lim_{j} \sum_{n=1}^\infty x_n^{(j)} w(n,k) = 0
\end{align*}
for each $k \in \N$. To this end fix $k \in \N$ and let $\varepsilon > 0$. 
There exists $M > 0$ such that $\norm{x^{(j)}} \le M$ for all $j \in J$ and, as $\sum_{n=1}^\infty w(n,k) < \infty$, we find $N \in \N$ 
such that $\sum_{n=N+1}^\infty w(n,k) < \eps/M$. Hence, $\sum_{n=N+1}^\infty x_n^{(j)} w(n,k) < \eps$ for all $j \in J$. 
As $(x^{(j)})_{j \in J}$ converges pointwise to $0$, this implies that $\sum_{n=1}^\infty x_n^{(j)} w(n,k) < 2\eps$ for all sufficiently large $j$. Thus, $T$ is order continuous.

To show that $T$ leaves $c_0$ invariant, let $(x_n) \in c_0$ and $\eps>0$. We fix $N\in \N$ such that $\abs{x_n} < \eps$ for all $n\geq N$.
By condition (a) we find $K \in \N$ such that $w(n,k) < \eps/N$ for all $1\leq n < N$ and every $k\geq K$.
Then
\begin{align*} \big(T(x_n)\big)_k &= \sum_{n=1}^\infty x_n w(n,k) = \sum_{n=1}^N x_nw(n,k) + \sum_{n=N+1}^\infty x_nw(n,k) \\
&\leq \norm{(x_n)} \eps + \eps \norm{T} \leq \eps(\norm{(x_n)} + C)
\end{align*}
for all $k\geq K$. This shows that $T$ preserves $c_0$. Since $T$ is an order continuous extension of $T|_{c_0}$ to $\ell^\infty$, 
it follows from Lemma \ref{lem:basics} that $T$ and $T|_{c_0}$ have the same norm.
\end{proof}

In view of Lemma \ref{lem:c0graphs} we give the following definition.
\begin{defn}
	A graph $(\N,w)$ with weight function $w\colon \N^2 \to [0,\infty)$ 
	that satisfies conditions (a) and (b) above is called a \emph{$c_0$-graph}.
	The corresponding positive operator $T$ on $c_0$ given by Lemma \ref{lem:c0graphs} is called its \emph{associated operator}.

Now let $(\N,w)$ be a $c_0$-graph and $u,v\in \N$.
We call a finite sequence $p_{u,v} \coloneqq (u_0,u_1,\dots,u_{n-1},u_n)$ of $n+1$ natural numbers a \emph{path from $u$ to $v$ of length $n$}
if $u_0=u$ and $u_n=v$; we call
\[ w(p_{u,v}) \coloneqq \prod_{k=1}^n w(u_{k-1},u_k)\]
the \emph{weight} of such a path $p_{u,v}$ and write $\abs{p_{u,v}}=n$.
\end{defn}

\begin{lem}
\label{lem:normTn}
	Let $(\N,w)$ be a $c_0$-graph with associated operator $T$ on $c_0$. Then
	\[ \norm{T^n} = \sup_{v\in\N} \sum_{u\in \N} \sum_{\abs{p_{u,v}}=n} w(p_{u,v}).\]
\end{lem}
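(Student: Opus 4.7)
The plan is to reduce this statement to Lemma \ref{lem:c0graphs} applied to the operator $T^n$. Since $T$ is positive, bounded and preserves $c_0$, so does $T^n$, and hence $T^n$ itself is the associated operator of a $c_0$-graph $(\N,w_n)$, where the weight function $w_n$ is defined by $w_n(u,v) \coloneqq (T^n e_u)_v$. Once I have established that
\[ w_n(u,v) = \sum_{\abs{p_{u,v}}=n} w(p_{u,v}), \]
Lemma \ref{lem:c0graphs} immediately yields
\[ \norm{T^n} = \sup_{v\in\N} \sum_{u\in\N} w_n(u,v) = \sup_{v\in\N} \sum_{u\in\N}\sum_{\abs{p_{u,v}}=n} w(p_{u,v}),\]
which is the desired formula.

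The main step is therefore the path formula for $w_n$, which I would prove by induction on $n$. The base case $n=1$ is just the definition of $w$. For the inductive step, I use the explicit formula for $T$ from Lemma \ref{lem:c0graphs} applied to the vector $T^n e_u = \big(w_n(u,k)\big)_{k\in\N}$:
\[ w_{n+1}(u,v) = \big(T(T^n e_u)\big)_v = \sum_{k=1}^\infty w_n(u,k)\, w(k,v) = \sum_{k=1}^\infty \sum_{\abs{p_{u,k}}=n} w(p_{u,k})\, w(k,v). \]
Every path of length $n+1$ from $u$ to $v$ decomposes uniquely as a path of length $n$ from $u$ to some intermediate vertex $k$ followed by the edge $(k,v)$, and its weight factorises accordingly; thus the last expression equals $\sum_{\abs{p_{u,v}}=n+1} w(p_{u,v})$, completing the induction.

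To close the argument I only need to check that $w_n$ really is a $c_0$-graph so that Lemma \ref{lem:c0graphs} applies: condition (a) holds because $T^n e_u \in c_0$, and condition (b) follows from the bound $\sum_{k=1}^N w_n(k,u) = \big(T^n \mathds{1}_{\{1,\dots,N\}}\big)_u \leq \norm{T^n}$, which is uniform in $N$ and $u$. No serious obstacle is expected; the only point requiring a bit of care is the interchange of summations in the inductive step, which is legitimate because all terms are non-negative, so Tonelli's theorem applies to the double series over $k$ and the set of paths of length $n$.
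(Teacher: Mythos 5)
Your proof is correct and follows essentially the same route as the paper's: an induction on $n$ establishing the path-weight formula for the entries of $T^n$ (with the same Tonelli-justified interchange of sums in the inductive step), followed by an appeal to the norm formula from the earlier lemmas. The only cosmetic difference is that the paper runs the induction on $\big(T^n\mathds{1}_{\{1,\dots,N\}}\big)_v$ and concludes via Lemma~\ref{lem:basics}, whereas you run it on $(T^n e_u)_v$, observe that $T^n$ is itself the associated operator of a $c_0$-graph, and conclude via Lemma~\ref{lem:c0graphs}.
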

\begin{proof}
	Fix $N\in\N$. We show inductively that 
	\begin{align}
	\label{eqn:Tnnorm}
	\big(T^n\mathds{1}_{\{1,\dots,N\}}\big)_v = \sum_{u=1}^N \sum_{\abs{p_{u,v}}=n} w(p_{u,v})
	\end{align}
	for all $n\in\N$. 
	Having proved this, the assertion follows from Lemma \ref{lem:basics}.
	For $n=1$ equation \eqref{eqn:Tnnorm} follows immediately from the definition of $T$ since
	\[ \big(T \mathds{1}_{\{1,\dots,N\}}\big)_v = \sum_{u=1}^N w(u,v) = \sum_{u=1}^N \sum_{\abs{p_{u,v}}=1} w(p_{u,v}).\]
	If \eqref{eqn:Tnnorm} holds for $n\in\N$, we obtain that
	\begin{align*}
		\big(T^{n+1} \mathds{1}_{\{1,\dots,N\}}\big)_v &= \biggl(T\biggl( \sum_{u=1}^N \sum_{\abs{p_{u,k}}=n} w(p_{u,k})\biggr)_{k\in\N}\biggr)_v
		= \sum_{k=1}^\infty \biggl( \sum_{u=1}^N \sum_{\abs{p_{u,k}}=n} w(p_{u,k})\biggr) w(k,v) \\
		&= \sum_{u=1}^N \sum_{k=1}^\infty \sum_{\abs{p_{u,k}}=n} w(p_{u,k})w(k,v) = \sum_{u=1}^N \sum_{\abs{p_{u,v}}={n+1}} w(p_{u,v})
	\end{align*}
	which completes the proof of \eqref{eqn:Tnnorm}.
\end{proof}

\begin{rem}
	The idea to represent linear operators on sequence spaces as weighted graphs has quite a long history. It was, for instance, used implicitly by Foguel in \cite{foguel1964} to construct a power bounded operator on a Hilbert space which is not similar to a contraction, and it was also used implicitly in \cite[Section~2]{mueller2007} to construct a counterexample related to the Blum--Hanson property. In \cite[Section~4]{oosterhout2009} this approach was worked out in great detail.
\end{rem}

We now describe a $c_0$-graph whose associated operator is power-bounded, mean ergodic but not weakly almost periodic. It
consists of countably many slightly modified copies of the infinite ladder-shaped graph $G_0$ described in Figure \ref{fig:laddergraph}.

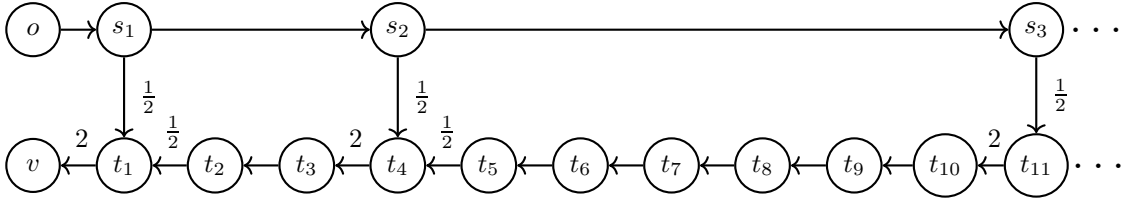
\begin{figure}[h]
\begin{tikzpicture}[scale=0.9] 
\begin{scope}[every node/.style={circle,draw,thick,minimum size=20pt}]
\node (11) at (0,2) {$o$};
\node (12) at (1.33,2) {$s_1$};
\node (13) at (5.33,2) {$s_2$};
\node (14) at (14.66,2) {$s_3$};

\node (21) at (0,0) {$v$};
\node (22) at (1.33,0) {$t_1$};
\node (23) at (2.66,0) {$t_2$};
\node (24) at (4,0) {$t_3$};
\node (25) at (5.33,0) {$t_4$};
\node (26) at (6.66,0) {$t_5$};
\node (27) at (8,0) {$t_6$};
\node (28) at (9.33,0) {$t_7$};
\node (29) at (10.66,0) {$t_8$};
\node (30) at (12,0) {$t_9$};
\node (31) at (13.33,0) {$t_{10}$};
\node (32) at (14.66,0) {$t_{11}$};
\end{scope}

\begin{scope}[every node/.style={circle}]
	\node (15) at (16,2) {};
	\node (33) at (16,0) {};
\end{scope}

\begin{scope}[>={Stealth[black]},every edge/.style={}, every node/.style={font=\huge}]
\path[->] (14) edge node {$\;\hdots$}(15);
\path[->] (32) edge node {$\;\hdots$}(33);
\end{scope}

\begin{scope}[every node/.style={circle}, every edge/.style={draw,thick}]
\path[->] (11) edge (12);
\path[->] (12) edge (13);
\path[->] (13) edge (14);
\path[->] (22) edge node[above]{$\;2$} (21);
\path[->] (23) edge node[above]{$\;\frac{1}{2}$} (22);
\path[->] (24) edge (23);
\path[->] (25) edge node[above]{$\;2$} (24);
\path[->] (26) edge node[above]{$\;\frac{1}{2}$} (25);
\path[->] (27) edge (26);
\path[->] (28) edge (27);
\path[->] (29) edge (28);
\path[->] (30) edge (29);
\path[->] (31) edge (30);
\path[->] (32) edge node[above]{$\;2$} (31);

\path[->] (12) edge node[right]{$\!\!\frac{1}{2}$} (22);
\path[->] (13) edge node[right]{$\!\!\frac{1}{2}$} (25);
\path[->] (14) edge node[right]{$\!\!\frac{1}{2}$} (32);
\end{scope}
\end{tikzpicture}
\caption{The ladder-shaped graph $G_0$.
Every unlabeled edge is weighted with $1$ and every missing edge is to be understood as weighted with $0$.}
\label{fig:laddergraph}
\end{figure}

Obviously, the graph $G_0$ satisfies properties (a) and (b) above and therefore is a $c_0$-graph.
However, the main property of this graph is that the length of paths with non-zero weight from vertex $o$ to $v$ grow exponentially:
if we sort these paths in ascending order by length, then the $n$th path is of length $2^{n+1}-1$. A moment of reflection shows that
for every $n\in\N$ and each vertex $u$ there are at most two paths with non-zero weight of length $n$ with endpoint $u$; the weight of each of them is bounded by $2$.
Thus, by Lemma \ref{lem:normTn}, the associated operator $T_0$ is power-bounded with $\norm{T_0^n}\leq 4$ for all $n\in\N$.

Now we consider countably many modified copies $G_k$, $k\in\N$, of $G_0$ that one obtains by removing the first
finitely many ladder rungs. More precisely, the $k$th copy $G_k$ is lacking the vertices $s_1,\dots,s_k$ and all their corresponding edges;
instead we add an uninterrupted edge from $o$ to $s_{k+1}$ of weight $1$, cf.\ Figure \ref{fig:laddergraphG_2}.
If $T_k$ denotes the operator associated with the graph $G_k$ for any $k\in\N_0$,  then one easily observes
that $(T^n_k e_o)_v = 1$ if $n=2^{m+2}-k-1$ for some $m\geq k$ and $0$ otherwise.

\begin{figure}[h]
\begin{tikzpicture}[scale=0.9] 
\begin{scope}[every node/.style={circle,draw,thick,minimum size=20pt}]
\node (11) at (0,2) {$o$};
\node (14) at (14.66,2) {$s_3$};

\node (21) at (0,0) {$v$};
\node (22) at (1.33,0) {$t_1$};
\node (23) at (2.66,0) {$t_2$};
\node (24) at (4,0) {$t_3$};
\node (25) at (5.33,0) {$t_4$};
\node (26) at (6.66,0) {$t_5$};
\node (27) at (8,0) {$t_6$};
\node (28) at (9.33,0) {$t_7$};
\node (29) at (10.66,0) {$t_8$};
\node (30) at (12,0) {$t_9$};
\node (31) at (13.33,0) {$t_{10}$};
\node (32) at (14.66,0) {$t_{11}$};
\end{scope}

\begin{scope}[every node/.style={circle}]
	\node (15) at (16,2) {};
	\node (33) at (16,0) {};
\end{scope}

\begin{scope}[>={Stealth[black]},every edge/.style={}, every node/.style={font=\huge}]
\path[->] (14) edge node {$\;\hdots$}(15);
\path[->] (32) edge node {$\;\hdots$}(33);
\end{scope}

\begin{scope}[every node/.style={circle}, every edge/.style={draw,thick}]
\path[->] (11) edge (14);
\path[->] (22) edge node[above]{$\;2$} (21);
\path[->] (23) edge node[above]{$\;\frac{1}{2}$} (22);
\path[->] (24) edge (23);
\path[->] (25) edge node[above]{$\;2$} (24);
\path[->] (26) edge node[above]{$\;\frac{1}{2}$} (25);
\path[->] (27) edge (26);
\path[->] (28) edge (27);
\path[->] (29) edge (28);
\path[->] (30) edge (29);
\path[->] (31) edge (30);
\path[->] (32) edge node[above]{$\;2$} (31);

\path[->] (14) edge node[right]{$\!\!\frac{1}{2}$} (32);
\end{scope}
\end{tikzpicture}
\caption{The second modified copy $G_2$ of $G_0$.}
\label{fig:laddergraphG_2}
\end{figure}
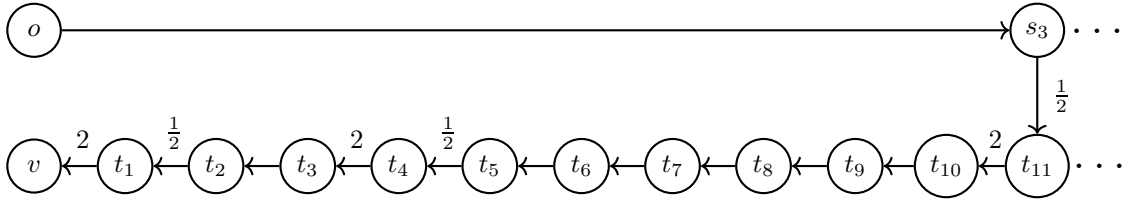

Finally, we connect all the graphs $G_k$ appropriately to obtain a graph/operator with all the desired properties.

\begin{example}
\label{exa:c0}
There exists a positive $T\colon c_0 \to c_0$ with $\norm{T^n}\leq 4$ for every $n\in\N$ such that all powers of $T$ are mean ergodic but $T$ is not weakly almost periodic.

Consider the $c_0$-graph shown in Figure \ref{fig:graph}; it obviously satisfies conditions (a) and (b) above and we denote its associated operator on $c_0$ by $T$. 
There exist at most two paths of equal length with a common endpoint and non-zero weight; the weight of each of them is bounded by $2$. Thus, by Lemma \ref{lem:normTn},
$\norm{T^n}\leq 4$ for all $n\in\N$.

\begin{figure}[h]
\begin{tikzpicture}[scale=0.9] 
\begin{scope}[every node/.style={circle,draw,thick,minimum size=20pt}]
\node (201) at (8,0) {$s$};
\node (202) at (8,2) {$o_0$};
\node (203) at (8,4) {};
\node (204) at (8,8) {};
\node (205) at (8,13.5) {};

\node (211) at (10,2) {$v_0$};
\node (212) at (10,4) {};
\node (213) at (10,5.33) {};
\node (214) at (10,6.66) {};
\node (215) at (10,8) {};
\node (218) at (10,12) {};
\node (219) at (10,13.5) {};
\node (210) at (10,9.5) {};
\end{scope}

\begin{scope}[every node/.style={circle}]
\node (230) at (8,15) {};
\node (231) at (10,15) {};
\node (216) at (10,10.5) {};
\node (217) at (10,11) {};
\end{scope}

\begin{scope}[every node/.style={circle}, every edge/.style={draw,thick}]
\path[->] (201) edge (202);
\path[->] (202) edge (203);
\path[->] (203) edge (204);
\path[->] (204) edge (205);
\path[->] (219) edge node[right] {$2$} (218);
\path[->] (218) edge (217);
\path[->] (216) edge (210);
\path[->] (210) edge node[right]{$\frac{1}{2}$}(215);
\path[->] (215) edge node[right]{$2$}(214);
\path[->] (214) edge (213);
\path[->] (213) edge node[right]{$\frac{1}{2}$}(212);
\path[->] (212) edge node[right]{$2$}(211);
\path[->] (203) edge node[above]{$\frac{1}{2}$}(212);
\path[->] (204) edge node[above]{$\frac{1}{2}$}(215);
\path[->] (205) edge node[above]{$\frac{1}{2}$}(219);
\end{scope}

\begin{scope}[>={Stealth[black]},every edge/.style={}, every node/.style={font=\huge}]
\path[->] (205) edge node {$\vdots$}(230);
\path[->] (219) edge node {$\vdots$}(231);
\path[->] (217) edge node {\small 4 more vertices}(216);
\end{scope}

\begin{scope}[every node/.style={circle,draw,thick,minimum size=20pt}]
\node (102) at (4,2) {$o_1$};
\node (104) at (4,8) {};
\node (105) at (4,13.5) {};
\node (111) at (6,2) {$v_1$};
\node (112) at (6,4) {};
\node (113) at (6,5.33) {};
\node (114) at (6,6.66) {};
\node (115) at (6,8) {};
\node (118) at (6,12) {};
\node (119) at (6,13.5) {};
\node (110) at (6,9.5) {};
\end{scope}

\begin{scope}[every node/.style={circle}]
\node (130) at (4,15) {};
\node (131) at (6,15) {};
\node (116) at (6,10.5) {};
\node (117) at (6,11) {};
\end{scope}

\begin{scope}[every node/.style={circle}, every edge/.style={draw,thick}]
\path[->] (202) edge [bend left] (102);
\path[->] (102) edge (104);
\path[->] (104) edge (105);
\path[->] (119) edge node[right] {$2$} (118);
\path[->] (118) edge (117);
\path[->] (116) edge (110);
\path[->] (110) edge node[right]{$\frac{1}{2}$}(115);
\path[->] (115) edge node[right]{$2$}(114);
\path[->] (114) edge (113);
\path[->] (113) edge node[right]{$\frac{1}{2}$} (112);
\path[->] (112) edge node[right]{$2$} (111);
\path[->] (104) edge node[above]{$\frac{1}{2}$}(115);
\path[->] (105) edge node[above]{$\frac{1}{2}$}(119);
\end{scope}

\begin{scope}[>={Stealth[black]},every edge/.style={}, every node/.style={font=\huge}]
\path[->] (105) edge node {$\vdots$}(130);
\path[->] (119) edge node {$\vdots$}(131);
\path[->] (117) edge node {\small 4 more vertices}(116);
\end{scope}

\begin{scope}[every node/.style={circle,draw,thick,minimum size=20pt}]
\node (2) at (0,2) {$o_2$};
\node (5) at (0,13.5) {};
\node (11) at (2,2) {$v_2$};
\node (12) at (2,4) {};
\node (13) at (2,5.33) {};
\node (14) at (2,6.66) {};
\node (15) at (2,8) {};
\node (18) at (2,12) {};
\node (19) at (2,13.5) {};
\node (10) at (2,9.5) {};
\end{scope}

\begin{scope}[every node/.style={circle,minimum size=0pt}]
\node (32) at (-2,2) {};
\node (30) at (0,15) {};
\node (31) at (2,15) {};
\node (16) at  (2,10.5) {};
\node (17) at  (2,11) {};
\end{scope}

\begin{scope}[every node/.style={circle}, every edge/.style={draw,thick}]
\path[->] (102) edge [bend left] (2);
\path[->] (2) edge (5);
\path[->] (19) edge node[right] {$2$} (18);
\path[->] (18) edge (17);
\path[->] (16) edge (10);
\path[->] (10) edge node[right]{$\frac{1}{2}$} (15);
\path[->] (15) edge node[right]{$2$} (14);
\path[->] (14) edge (13);
\path[->] (13) edge node[right]{$\frac{1}{2}$} (12);
\path[->] (12) edge node[right]{$2$} (11);
\path[->] (5) edge node[above]{$\frac{1}{2}$}(19);
\end{scope}

\begin{scope}[>={Stealth[black]},every edge/.style={}, every node/.style={font=\huge}]
\path[->] (32) edge node {$\hdots$}(2);
\path[->] (5) edge node {$\vdots$}(30);
\path[->] (19) edge node {$\vdots$}(31);
\path[->] (17) edge node {\small 4 more vertices}(16);
\end{scope}

\node at (0.9,6) {$G_2$};
\node at (4.9,6) {$G_1$};
\node at (8.9,6) {$G_0$};
\end{tikzpicture}
\caption{}
\label{fig:graph}
\end{figure}

Now we prove that the operator $T$ is not almost weakly periodic.  To this end, we show that the orbit $\{T^n e_s : n\in\N\}$
is not relatively weakly compact. By the Eberlein-\v{S}mulian theorem, it is sufficient to show that there exists a subsequence of
$(T^ne_s)_{n\in\N}$ which does not posses a weakly convergent subsequence.
Taking into account the previous considerations concerning the subgraphs $G_k$ from Figures \ref{fig:laddergraph} and \ref{fig:laddergraphG_2},
one observes that for every $k\in \N_0$
\[ (T^n e_s)_{v_k} = \begin{cases} 1 & \text{if }n=2^{m+2} \text{ for some }m\geq k\\
0 & \text{otherwise.} \end{cases}
\]
In particular, $(T^{2^m}e_s)_{v_k} \to 1$ as $m \to \infty$ for all $k\in\N_0$. This shows that the sequence
$(T^{2^m}e_s)_{m\in\N}$ has no subsequence that converges pointwise to an element of $c_0$. In particular, no subsequence 
of $(T^{2^m}e_s)$ is weakly convergent.

It remains to show that $T$ and all of its powers are mean ergodic. To this end, let $T^* \colon \ell^1 \to \ell^1$ denote the adjoint of $T$.
It is easy to check that 
\[ \big(T^*(y_n)\big)_u = \biggr(\sum_{n=1}^\infty y_n w(u,n)\biggr)_u \text{ for all }u\in \N,\]
i.e.\ its action is described by the same graph as $T$ but with contrarily directed edges. Let $y\in \ell^1$ such that
$T^* y=y$.  Since $(T^*y)_{v_k} = 0$ for all $k\in \N_0$, it follows that $y$ vanishes on the right-hand side of any $G_k$ in Figure \ref{fig:graph},
i.e.\ on all those vertices previously labeled with $t_1,t_2,t_3,\dots$ in Figure \ref{fig:laddergraph} and \ref{fig:laddergraphG_2}.
Since $y$ is a null sequence, this readily implies $y=0$.
Hence, the fixed space of $T$ separates the fixed space of $T^*$ and the mean ergodic theorem \cite[Thm 8.20]{haase2015} implies that 
$T$ is mean ergodic with mean ergodic projection $0$. It thus follows from Theorem~\ref{thm:ergodic-powers} below that $T^m$ is also mean ergodic for every $m \in \N$.
\end{example}

\begin{rem}
	Let us point out that $\norm{x}_T \coloneqq \sup\{ \norm{T^n \abs{x} } : n\in \N_0\}$ defines an equivalent norm on $c_0$,
	with respect to which the operator $T$ is contractive. Hence, there exists an atomic Banach lattice with order continuous norm $E$
	and a positive contraction $T \colon E \to E$ which is mean ergodic but not weakly almost periodic.
	However, it is not known to the authors if such an operator exists on $c_0$ endowed with the $\infty$-norm.
\end{rem}

\section{A counterexample on $\ell^\infty$}
\label{sec:linfty}

The following example gives a negative answer to Question~6.(iii) in \cite{emelyanov2009}. It is an adaptation of \cite[Exa~1, Sec~V.5]{schaefer1974}.

\begin{example}
\label{exa:linfty}
There exists a positive and contractive operator $T \colon \ell^\infty \to \ell^\infty$ such that $T$ is ergodic while $T^{2j}$ is not for any $j\in\N$.
In particular, $T$ is not weakly almost periodic.
\end{example}
\begin{proof}
	We first consider the space $F\coloneqq (\R^2,\norm{\argument}_\infty)$. The matrix 
	\[ S\coloneqq \frac{1}{\sqrt{2}} \begin{pmatrix} 1 & 1 \\ 1 & -1 \end{pmatrix} \]
	defines	a bijective linear transformation on $F$ such that $S^{-1}=S^T=S$.
	Now let $a_m \coloneqq \big(1-\frac{1}{m}\big)$ for $m\in\N$ and define the operators
	\[ T_m \coloneqq S \begin{pmatrix} 1 & 0 \\ 0 & -a_m \end{pmatrix} S^{-1} = \frac{1}{2} \begin{pmatrix} \frac{1}{m} & 2 - \frac{1}{m} \\ 2-\frac{1}{m} & \frac{1}{m} \end{pmatrix} \]
	on $F$. Clearly, every $T_m$ is positive and contractive on $F$ and 
	\[ \frac{1}{n} \sum_{k=0}^{n-1} T_m^k = S \cdot \frac{1}{n} \sum_{k=0}^{n-1} \begin{pmatrix} 1 & 0 \\ 0 & (-a_m)^k \end{pmatrix} \cdot S^{-1}.\]
	for each $n \in \N$. As $\abs{\sum_{k=0}^n (-a_m)^k}\leq 1$ for all $n\in\N$ and every $m\in\N$, the Ces\`aro averages of $T_m$ converge to 
	\[U \coloneqq S \begin{pmatrix} 1 & 0 \\ 0 & 0 \end{pmatrix} S^{-1} = \frac{1}{2} \begin{pmatrix} 1 & 1 \\ 1 & 1 \end{pmatrix}\]
	and even uniformly in $m\in\N$.
	Now we define the operators $T\coloneqq T_1 \oplus T_2 \oplus T_3 \oplus\dots$ and $\cU \coloneqq U \oplus U \oplus U \oplus \dots$ 
	on the space $F\oplus F \oplus F \oplus \dots$ of all bounded sequences in $F$ which is, when endowed with the supremum norm, isometrically lattice isomorphic to $\ell^\infty$. It follows
	from the previous considerations that the Ces{\`a}ro means $\frac{1}{n}\sum_{k=0}^{n-1} T^k$ converge to $\cU$ with respect to the operator norm as $n \to \infty$. 

	Now we turn our attention to even powers of the operator $T$. Fix $j\in \N$. The operator $T^{2j}$ is similar to the operator
	\[ \begin{pmatrix} 1 & 0 \\ 0 & (-a_1)^{2j} \end{pmatrix} \oplus \begin{pmatrix} 1 & 0 \\ 0 & (-a_2)^{2j} \end{pmatrix} \oplus \begin{pmatrix} 1 & 0 \\ 0 & (-a_3)^{2j} \end{pmatrix} \oplus \dots\]
	on $\ell^\infty$. Hence, if $T^{2j}$ was mean ergodic, then so was the multiplication operator $M \colon \ell^\infty \to \ell^\infty$ given by $M(x_m) = \big((a_m)^{2j}x_m\big)$ 
	for all $(x_m) \in \ell^\infty$. 
	Since $M$ is positive and has spectral radius $1$, its adjoint $M^*$ has a non-zero fixed point \cite[p.\ 705]{schaefer1967} but obviously the only fixed point of $M$ is $0$. 
	Thus, $M$ cannot be mean ergodic and neither can $T^{2j}$. 
	Due to the classical mean ergodic theorem, this implies that $T$ is
	not weakly almost periodic.
\end{proof}

\begin{rem}
	An alternative way to see that $T^{2j}$ is not mean ergodic for any $j \in \N$ in Example~\ref{exa:linfty} is by direct computation: the $n$-th Ces\'{a}ro mean $\frac{1}{n} \sum_{k=0}^{n-1} (T^{2j})^k$ of $T^{2j}$, applied to the vector 
	$(
	S
	\begin{pmatrix}
		0 \\ 1
	\end{pmatrix}, \;
	S
	\begin{pmatrix}
		0 \\ 1
	\end{pmatrix},
	\dots
	)$,
	is given by
	$(
	S
	\begin{pmatrix}
		0 \\ b_{1,n}
	\end{pmatrix}, \;
	S
	\begin{pmatrix}
		0 \\ b_{2,n}
	\end{pmatrix},
	\dots
	)$,
	where $b_{m,n} \coloneqq \frac{1}{n} \frac{1 - a_m^{2jn}}{1 - a_m^{2j}}$ for all $m,n \in \N$. For each fixed $m$, the number $b_{m,n}$ converges to $0$ as $n \to \infty$, but it is easy to see that the convergence is not uniform with respect to $m$ (since $a_m^{2j}$ is close to $1$ for large $m$). Hence, $T^{2j}$ is not mean ergodic.
\end{rem}

As a consequence of Examples \ref{exa:c0} and \ref{exa:linfty} we obtain a sufficient condition for an order complete Banach lattice to be a KB-space; see Theorem~\ref{thm:KB} below. Recall that a Banach lattice $E$ is called a \emph{KB-space} if every norm bounded increasing sequence (equivalently: net) in the positive cone $E_+$ is norm convergent. Important examples of KB-spaces are the class of all reflexive Banach lattices and the class of all $L^1$-spaces, but there are also more complicated examples (see for instance \cite[p.\,95]{meyer1991}). For a thorough treatment of KB-spaces we refer the reader to \cite[Section~2.4]{meyer1991}; here we only recall that every KB-space has order continuous norm and that a Banach lattice $E$ is a KB-space if and only if $E$ does not contain any sublattice isomorphic to $c_0$ \cite[Theorem~2.4.12]{meyer1991}.

\begin{thm}
\label{thm:KB}
	Let $E$ be a countably order complete Banach lattice such that every positive, power-bounded and mean ergodic operator $T$ on $E$ is weakly almost periodic. Then $E$ is
	a KB-space.
\end{thm}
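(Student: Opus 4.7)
The plan is to prove the contrapositive: assuming $E$ is countably order complete but not a KB-space, I construct a positive, power-bounded, mean ergodic operator on $E$ that is not weakly almost periodic by transplanting the $\ell^\infty$-counterexample $T_0$ of Example~\ref{exa:linfty} via a lattice embedding $J \colon \ell^\infty \hookrightarrow E$ with a bounded positive left inverse $Q \colon E \to \ell^\infty$. Since $E$ is not a KB-space, there is a norm bounded increasing sequence $(x_n) \subseteq E_+$ that is not norm convergent; by countable order completeness $x \coloneqq \sup_n x_n$ exists, and after passing to a subsequence I may assume $\|x - x_n\| \geq \varepsilon$ for some $\varepsilon > 0$. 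A standard disjointification of the positive increments $x_{n+1} - x_n$, all dominated by $x$, yields a pairwise disjoint positive sequence $(z_n)_{n \in \N}$ with $z_n \leq x$ and $\inf_n \|z_n\| > 0$.

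Using countable order completeness, the supremum $\bigvee_n a_n z_n$ exists in $E$ for each $a = (a_n) \in \ell^\infty_+$ (it is dominated by $\|a\|_\infty x$), and extending linearly gives a lattice homomorphism $J \colon \ell^\infty \to E$ with $J(e_n) = z_n$; disjointness of $(z_n)$ together with $\inf_n \|z_n\| > 0$ forces $J$ to be bounded below, hence a closed lattice embedding. For the left inverse, let $B_n$ be the principal band generated by $z_n$, with band projection $P_n$; both exist and $P_n$ is $\sigma$-order continuous by countable order completeness. The Hahn--Banach theorem, combined with the standard trick of passing to the positive part in the dual, produces positive functionals $\psi_n \in B_n^*$ with $\psi_n(z_n) = 1$ and $\|\psi_n\| \leq 1/\|z_n\|$, uniformly bounded in $n$. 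Setting $\varphi_n \coloneqq \psi_n \circ P_n \in E^*$, the formula $Q(x) \coloneqq (\varphi_n(x))_{n \in \N}$ defines a positive bounded operator $Q \colon E \to \ell^\infty$. The identity $QJ = \id_{\ell^\infty}$ reduces to the computation $P_n \bigl( \bigvee_m a_m z_m \bigr) = a_n z_n$, which uses $\sigma$-order continuity of $P_n$ together with $P_n z_m = \delta_{nm} z_n$.

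Finally, set $T \coloneqq J T_0 Q$. The identity $QJ = \id$ gives $T^k = J T_0^k Q$ for every $k \geq 1$, so $T$ is positive and power-bounded (as $T_0$ is contractive), and the Ces\`aro averages of $T$ converge in operator norm to $J U Q$, where $U$ is the mean ergodic projection of $T_0$; hence $T$ is mean ergodic. On the other hand, for any $y \in \ell^\infty$ and $x \coloneqq J(y)$ one has $T^{2k} x = J T_0^{2k} y$ for all $k \geq 1$, so if $T^2$ were mean ergodic then, since $J$ is bounded below, $T_0^2$ would be mean ergodic on $\ell^\infty$, contradicting Example~\ref{exa:linfty}. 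The classical mean ergodic theorem therefore forces $T$ to fail weak almost periodicity, contradicting the hypothesis on $E$. I expect the main obstacle to be the construction of the positive left inverse $Q$: countable order completeness is exactly what makes it work, providing both the band projections $P_n$ and their $\sigma$-order continuity, which together deliver $Q J = \id$.
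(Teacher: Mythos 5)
There is a genuine gap: your argument only covers one of the two cases that the theorem requires, and the step that launches your construction fails in the other case. You start from a norm bounded increasing sequence $(x_n) \subseteq E_+$ that is not norm convergent and invoke countable order completeness to form $x = \sup_n x_n$. But countable order completeness only yields suprema of \emph{order bounded} sequences, and a norm bounded increasing sequence need not be order bounded. The space $E = c_0$ is a concrete counterexample to your first step: it is order complete and not a KB-space, yet the sequence $x_n = \mathds{1}_{\{1,\dots,n\}}$ is norm bounded, increasing, not norm convergent, and has no supremum in $c_0$. Worse, your whole strategy of embedding $\ell^\infty$ into $E$ cannot work there: a countably order complete Banach lattice contains a lattice copy of $\ell^\infty$ (if and) only if its norm fails to be order continuous, so for $E$ with order continuous norm (such as $c_0$) no such embedding $J$ exists. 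This is exactly why the paper's proof splits into two cases: when the norm of $E$ is not order continuous, it transplants the $\ell^\infty$-example via Lemma~\ref{lem:linfty-as-sublattice} (essentially the construction you describe, but starting from an \emph{order bounded} disjoint sequence not converging to zero, which is what the failure of order continuity actually provides); when the norm is order continuous but $E$ is not a KB-space, it instead uses the fact that $E$ contains a positively complemented lattice copy of $c_0$ and transplants the operator of Example~\ref{exa:c0} --- the main construction of the paper, which your proposal does not use at all.

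Within the non-order-continuous case your outline is essentially the paper's Lemma~\ref{lem:linfty-as-sublattice} plus the transplant $S = iTi^{-1}P$, and the ideas (band projections $P_n$, functionals $\psi_n \circ P_n$, the identity $QJ = \id$) are sound, though you should derive the disjoint sequence $(z_n)$ with $\inf_n\|z_n\|>0$ from the characterization of non-order-continuity (an order bounded disjoint sequence not converging to zero) rather than by disjointifying increments of an increasing sequence, since disjointification does not automatically preserve a lower bound on the norms. To repair the proof you must add the order continuous case and, for it, a mean ergodic but not weakly almost periodic positive power-bounded operator on $c_0$; that is precisely Example~\ref{exa:c0}.
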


It is known that a countably order complete Banach lattice that does not have order continuous norm contains a sublattice which is isomorphic to $\ell^\infty$ \cite[Cor~2.4.3]{meyer1991}. 
For the proof of Theorem~\ref{thm:KB} we need a bit more, namely that there exists a sublattice which is isomorphic to $\ell^\infty$ and which is at the same time the range of a positive projection. 
Since we could not find an explicit reference for this observation in the literature, we include a proof in the following lemma. 
The Banach lattice isomorphism in the statement of the lemma is not necessarily isometric.

\begin{lem} \label{lem:linfty-as-sublattice}
	Let $E$ be a countably order complete Banach lattice which does not have order continuous norm. Then there exists a closed vector sublattice $F$ of $E$ with the following properties:
	\begin{enumerate}[(a)]
		\item $F$ is the range of a positive projection $P \colon E \to E$.
		\item There exists a Banach lattice isomorphism $i \colon \ell^\infty \to F$.
	\end{enumerate}
\end{lem}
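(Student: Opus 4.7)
The plan is to bootstrap from a single order-bounded disjoint sequence to both the embedding $i$ and the projection $P$, with the latter being the harder piece. Since the norm of $E$ is not order continuous, there is an order-bounded pairwise disjoint sequence $(y_n)$ in $E_+$ with $y_n \le u$ for some fixed $u \in E_+$ and $\|y_n\| \ge \varepsilon > 0$ for every $n$ (this is the standard characterization, cf.\ \cite[Theorem~2.4.2]{meyer1991}). For each $\alpha \in \ell^\infty_+$, countable order completeness and disjointness make the partial sums $\sum_{n=1}^N \alpha_n y_n$ an increasing sequence bounded above by $\|\alpha\|_\infty u$, so it has a supremum which we denote by $i(\alpha)$; extending by $i(\alpha) \coloneqq i(\alpha^+) - i(\alpha^-)$ produces a linear lattice homomorphism $i \colon \ell^\infty \to E$. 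The estimates $|i(\alpha)| = i(|\alpha|) \ge |\alpha_n| y_n$ and $|i(\alpha)| \le \|\alpha\|_\infty u$ give $\varepsilon \|\alpha\|_\infty \le \|i(\alpha)\| \le \|u\| \|\alpha\|_\infty$, so $F \coloneqq i(\ell^\infty)$ is a closed vector sublattice of $E$ and $i \colon \ell^\infty \to F$ is a Banach lattice isomorphism, which settles part (b).

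For part (a) we first construct coordinate functionals. Since $E$ is countably order complete, for each $n$ the principal band $B_n$ generated by $y_n$ is a projection band (realised on the positive cone by $P_n x = \sup_k (x \wedge k y_n)$), which furnishes a positive band projection $P_n \colon E \to E$ of norm at most $1$ satisfying $P_n y_n = y_n$ and $P_n y_m = 0$ for $m \ne n$. By the standard fact that every positive element of a Banach lattice admits a positive norming functional, pick $\psi_n \in E^*_+$ with $\|\psi_n\| \le 1$ and $\psi_n(y_n) = \|y_n\| \ge \varepsilon$, and set $\phi_n \coloneqq \psi_n \circ P_n \in E^*_+$. Then $\|\phi_n\| \le 1$, $\phi_n$ vanishes on $B_n^d$, and hence $\phi_n(y_m) = 0$ for $m \ne n$ while $\phi_n(y_n) \ge \varepsilon$. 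With $c_n \coloneqq \phi_n(y_n)^{-1} \in (0, 1/\varepsilon]$, the sequence $\bigl(c_n \phi_n(x)\bigr)_{n \in \N}$ lies in $\ell^\infty$ with sup-norm at most $\|x\|/\varepsilon$, so $Px \coloneqq i\bigl((c_n \phi_n(x))_{n \in \N}\bigr)$ defines a positive bounded linear operator $P \colon E \to F \subseteq E$.

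It remains to show that $P$ restricts to the identity on $F$, and this reduces to the identity $\phi_n(i(\alpha)) = \alpha_n \phi_n(y_n)$ for every $\alpha \in \ell^\infty_+$. This is the delicate step: $\phi_n$ is in general not order continuous, so it cannot simply be exchanged with the order supremum defining $i(\alpha)$; the entire reason for routing $\phi_n$ through the band projection $P_n$ is that $P_n$ \emph{is} order continuous (a classical property of band projections in countably order complete spaces). Applying $P_n$ to the increasing sequence of partial sums and using $P_n y_m = \delta_{mn} y_n$ yields $P_n(i(\alpha)) = \sup_N P_n\bigl(\sum_{m \le N} \alpha_m y_m\bigr) = \alpha_n y_n$, whence $\phi_n(i(\alpha)) = \psi_n(\alpha_n y_n) = \alpha_n \phi_n(y_n)$ and thus $c_n \phi_n(i(\alpha)) = \alpha_n$. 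Consequently $P \circ i = i$, so $P|_F = \id_F$ and in particular $P^2 = P$, completing (a). The central obstacle is precisely this commutation step, which forces the coordinate functionals to be built from band projections rather than arbitrary positive norming functionals.
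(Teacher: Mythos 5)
Your proposal is correct and follows essentially the same route as the paper: take an order-bounded disjoint non-null sequence (via \cite[Thm 2.4.2]{meyer1991}), define $i$ on $\ell^\infty_+$ by a supremum, and build the coordinate functionals as norming functionals composed with the band projections onto the principal bands, so that $P = i \circ \tau$ is the desired positive projection. The only cosmetic differences are that you verify the isomorphism estimates for $i$ directly where the paper cites \cite[Lem 2.3.10(ii)]{meyer1991}, and you carry normalizing constants $c_n$ instead of rescaling the $x_n$ and $\phi_n$ at the outset; your explicit remark that the order continuity of the band projections is what makes $\tau \circ i = \id_{\ell^\infty}$ work is a point the paper leaves implicit.
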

\begin{proof}
	By \cite[Thm 2.4.2]{meyer1991} there exists an order bounded disjoint sequence $(x_n)\subseteq E_+$ which does not converge to zero. Without loss of generality
	we may assume that $\norm{x_n}=1$ for each $n\in\N$.
	Now pick a sequence of positive functionals $\phi_n\in E'_+$ such that $\norm{\phi_n}=1$ and $\applied{\phi_n}{x_n}=1$ for every $n\in\N$.
	By the countably order completeness, every principle band is a projection band. 
	If $P_n\colon E \to \{x_n\}^{\bot\bot}$ denotes the band projection onto $\{x_n\}^{\bot\bot}$, then
	each $\psi_n\coloneqq \phi_n \circ P_n$ is a positive functional on $E$ such that
	$\norm{\psi_n}=1$. Define
	\[ i(a_n)  \coloneqq \sup\{ a_n x_n : n\in \N\}\]
	for every sequence $(a_n) \in \ell^\infty_+$. According to (the proof of) \cite[Lem 2.3.10(ii)]{meyer1991}, 
	$i$ extends to a lattice homomorphism $i\colon \ell^\infty \to E$ with closed range $F \coloneqq i(\ell^\infty)$ and $i \colon \ell^\infty \to F$ is a Banach lattice isomorphism.
	
	Clearly, $\tau \colon E \to \ell^\infty$, given by $\tau(x) \coloneqq (\psi_n(x))_{n\in\N}$, is a positive operator and it follows right 
	from the definitions that $\tau \circ i  = \id_{\ell^\infty}$. Hence, $P \coloneqq i \circ \tau$ is a positive projection on $E$ with range $F$.
\end{proof}

\begin{proof}[Proof of Theorem~\ref{thm:KB}]
	Let us first consider the case that $E$ does not have order continuous norm. Then, according to Lemma~\ref{lem:linfty-as-sublattice}, there exists a closed sublattice $F \subseteq E$, 
	a Banach lattice isomorphism $i\colon \ell^\infty \to F$ and a positive projection $P \colon E \to E$ with range $F$. 
	Now let $T\colon \ell^\infty \to \ell^\infty$ denote the operator from Example \ref{exa:linfty} and define $S\colon E\to E$ by
	$S \coloneqq iTi^{-1}P$. 
	It is easy to verify that $S$ is a positive, power-bounded and mean ergodic operator which is not weakly almost periodic. 

	Now let $E$ be Banach lattice with order continuous norm which is no KB-space. Then, by \cite[Thm 2.4.12]{meyer1991}, $E$ contains a copy of $c_0$
	meaning that there exists a closed sublattice $F\subseteq E$ and a Banach lattice isomorphism $i\colon c_0 \to F$.
	Moreover, by \cite[Cor 2.4.3]{meyer1991}, there exists a positive projection $P\colon E \to E$ with range $F$.
	Let $T\colon c_0 \to c_0$ denote the operator from Example \ref{exa:c0} and, as above, define $S\colon E\to E$ by $S \coloneqq iTi^{-1}P$.
	Again, it is easy to verify that $S$ is a positive, power-bounded and mean ergodic operator which is not weakly almost periodic.
\end{proof}

\begin{rem}
	The question whether the converse of Theorem \ref{thm:KB} also holds is still open; see also Open Problem 3.1.19 in \cite{emelyanov2007}.
\end{rem}

\section{A mean ergodic theorem} \label{sec:ergodic-powers}

In general the powers of a positive mean-ergodic operator need not be mean ergodic,
even if the operator is power-bounded (see \cite{sine1976}, \cite{emelyanov2009} and Example~\ref{exa:linfty}). 
If the underlying Banach lattice has, however, order continuous norm and $T$ is positive and mean ergodic, then each power of $T$ is mean ergodic, too (see \cite[Prop 4.5]{derriennic1981} 
for the case of power-bounded operators and see \cite[Thm 12]{emelyanov2003} for the general case, even on more general ordered Banach spaces). 
We close this article by proving another mean ergodic theorem of this type. Instead of imposing a condition on the Banach lattice we assume that the mean ergodic projection of the operator is $0$.

\begin{thm} \label{thm:ergodic-powers}
	Let $T$ be a positive operator on a Banach lattice $E$ and assume that $T$ is mean ergodic with mean ergodic projection $0$. Then every power of $T$ is also mean ergodic with mean ergodic projection $0$.
\end{thm}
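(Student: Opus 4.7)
The plan is to exploit positivity to dominate the Ces\`aro averages of $T^m$ by those of $T$, and then invoke that in a Banach lattice the norm is monotone.

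First I would unpack what the hypothesis says: mean ergodicity of $T$ with projection $0$ means
\[ A_N x \coloneqq \frac{1}{N}\sum_{j=0}^{N-1} T^j x \xrightarrow{N\to\infty} 0 \qquad \text{in norm, for every } x\in E. \]
Fix $m\in\N$ and write $A_n^{(m)} x \coloneqq \frac{1}{n}\sum_{k=0}^{n-1} T^{mk} x$. The goal is to show $A_n^{(m)} x \to 0$ in norm for every $x\in E$.

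The key observation is positivity. For $x\in E_+$, the sum $\sum_{j=0}^{mn-1} T^j x$ contains every term $T^{mk}x$ with $0\leq k\leq n-1$ together with further non-negative summands, so
\[ 0 \leq \sum_{k=0}^{n-1} T^{mk} x \leq \sum_{j=0}^{mn-1} T^j x. \]
Dividing by $n$, this rearranges to
\[ 0 \leq A_n^{(m)} x \leq m\cdot A_{mn} x. \]
Since $E$ is a Banach lattice, the norm is monotone on the positive cone, so $\norm{A_n^{(m)} x} \leq m\,\norm{A_{mn} x}$, and the right-hand side tends to $0$ by the assumption on $T$. Hence $A_n^{(m)} x \to 0$ for every $x\in E_+$.

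For an arbitrary $x\in E$, the decomposition $x=x^+-x^-$ together with linearity of $A_n^{(m)}$ yields $A_n^{(m)} x \to 0$ as well. Thus $T^m$ is mean ergodic with mean ergodic projection equal to $0$, which finishes the proof.

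The only step worth flagging is the domination inequality: it depends crucially both on $T\geq 0$ (to make the missing summands of the long Ces\`aro sum non-negative) and on the mean ergodic projection being $0$ (so that blowing up the denominator by the factor $m$ is harmless in the limit). If either hypothesis is dropped, the argument collapses, which is consistent with the counterexamples cited in Section~\ref{sec:linfty}.
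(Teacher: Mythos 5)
Your proof is correct, and it goes by a genuinely different route than the paper. You prove the convergence $\frac{1}{n}\sum_{k=0}^{n-1}T^{mk}x \to 0$ directly, via the domination
\[
0 \;\leq\; \frac{1}{n}\sum_{k=0}^{n-1} T^{mk} x \;\leq\; m\cdot\frac{1}{mn}\sum_{j=0}^{mn-1} T^{j} x \qquad (x \in E_+),
\]
which is valid because the indices $mk$, $0\le k\le n-1$, all lie in $\{0,\dots,mn-1\}$ and the omitted summands are positive; monotonicity of the lattice norm and the decomposition $x = x^+ - x^-$ then finish the argument. The paper instead invokes the mean ergodic theorem: it checks that the Ces\`aro means of $T^m$ are uniformly bounded and that $T^{mn}/n \to 0$ strongly, and then reduces the claim to showing that $T^*$ has no eigenvalue on the unit circle, which it rules out by observing that an eigenvector $\varphi$ to a unimodular eigenvalue would satisfy $\abs{\varphi} \le (T^*)^n\abs{\varphi}$ for all $n$, contradicting the weak$^*$ convergence of the adjoint Ces\`aro means to $0$. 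Your argument is more elementary and self-contained (no appeal to the mean ergodic theorem or to adjoints), while the paper's argument yields the extra information that $T^*$ has empty peripheral point spectrum, which is what powers the subsequent remark that $\lambda T$ is mean ergodic with projection $0$ for every unimodular $\lambda$ --- though that remark can also be recovered from your domination idea via $\abs{\sum_j \lambda^j T^j x} \le \sum_j T^j\abs{x}$.
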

\begin{proof}
	Fix $m \in \N$. By the uniform boundedness principle the Ces{\`a}ro means of $T$ are uniformly bounded, and $(T^n/n)$ converges strongly to $0$ as $n \to \infty$. 
	Hence, $(T^{mn}/n)$ also converges strongly to $0$ as $n \to \infty$ and it follows from the positivity of $T$ that the Ces{\`a}ro means of $T^m$ are uniformly bounded, too. 
	Therefore, we only have to show that the fixed space of $(T^m)^*$ equals $\{0\}$, i.e.\ that $1$ is not an eigenvalue of $(T^m)^*$.
	To this end, it suffices to prove that $T^*$ has no eigenvalues on the unit circle.
	
	So assume for a contradiction that $\lambda$ is an eigenvalue of $T^*$ with $\abs{\lambda} = 1$ and let $\varphi$ be an associated eigenvector. 
	Since $\abs{ \varphi } = \abs{ T^*\varphi } \leq T^* \abs{ \varphi }$, the sequence $\big((T^*)^n\abs{ \varphi}\big)_{n \in \N_0}$ is increasing. 
	In particular, the Ces{\`a}ro means of $T^*$ applied to $\abs{\varphi}$ are all larger than $\abs{ \varphi }$. 
	On the other hand, these Ces{\`a}ro means are weak$^*$ convergent to $0$ since $T$ is mean ergodic with mean ergodic projection $0$. This is a contradiction as $\abs{ \varphi } \not= 0$.
\end{proof}

\begin{rem}
	The proof of Theorem~\ref{thm:ergodic-powers} also shows that $\lambda T$ is mean ergodic with mean ergodic projection $0$ for every complex number $\lambda$ of modulus $1$.
\end{rem}

\bibliographystyle{abbrv}
\bibliography{literature}

\end{document}